\DeclareFontFamily{OT1}{rsfs10}{}
\DeclareFontShape{OT1}{rsfs10}{m}{n}{ <-> rsfs10 }{}
\DeclareMathAlphabet{\mathscript}{OT1}{rsfs10}{m}{n}
\DeclareMathOperator{\Hom}{Hom}     
\DeclareMathOperator{\Pic}{Pic}     
\DeclareMathOperator{\Cl}{Cl}       
\DeclareMathOperator{\rk}{rk}       
\DeclareMathOperator{\Mov}{Mov}     
\DeclareMathOperator{\Nef}{Nef}     
\DeclareMathOperator{\Eff}{Eff}     
\DeclareMathOperator{\Relint}{Relint}  
\def \b{\beta }
\def \l{\lambda }
\def \s{\sigma }
\def \Si{\Sigma }
\def \g{\gamma}
\def \q{\mathbf{q}}
\def \pp{\mathbf{p}}
\def \ss{\mathbf{s}}
\def \v{\mathbf{v}}
\def \n{\mathbf{n}}
\def \1{\mathbf{1}}
\def \0{\mathbf{0}}
\def\P{{\mathbb{P}}}
\def\p2{\mathbb{P}^2}
\def\p3{\mathbb{P}^3}
\def\p4{\mathbb{P}^4}
\def\r{\mathbf{r}}
\def\rk{\operatorname{rk}}
\def\Z{\mathbb{Z}}
\def\C{\mathbb{C}}
\def\R{\mathbb{R}}
\def\Q{\mathbb{Q}}
\def\N{\mathbb{N}}
\def\B{\mathcal{B}}
\def\SF{\mathcal{SF}}
\def\gkz{\mathcal{Q}}
\newcommand{\halfline}{\vskip6pt}
\newcommand{\ka}{K\"{a}hler }
\theoremstyle{plain}
\newtheorem{theorem}{Theorem}[section]
\newtheorem{thm-def}[theorem]{Theorem--Definition}
\newtheorem{lemma}[theorem]{Lemma}
\newtheorem*{a-proposition}{Proposition}
\theoremstyle{remark}
\newtheorem{remark}[theorem]{Remark}
\newtheorem{example}[theorem]{Example}
\theoremstyle{definition}
\newtheorem*{step I}{Step I}
\newtheorem*{step II}{Step II}
\newtheorem*{step III}{Step III}
\newtheorem*{step IV}{Step IV}
\newtheorem*{acknowledgements}{Acknowledgements}
\title[A $\Q$--F.C. toric variety with Picard number 2 is projective]{A $\Q$--factorial complete toric variety\\ with Picard number 2 is projective}
\author[M. Rossi and L.Terracini]{Michele Rossi and Lea Terracini}
\date{\today}
\address{Dipartimento di Matematica, Universit\`a di Torino,
via Carlo Alberto 10, 10123 Torino} \email{michele.rossi@unito.it,
lea.terracini@unito.it}
\thanks{The authors were partially supported by the MIUR-PRIN 2010-11 Research Funds ``Geometria delle Variet\`{a} Algebriche''. The first author is also supported by the I.N.D.A.M. as a member of the G.N.S.A.G.A.}
\begin{document}

\begin{abstract}
This paper is devoted to settle two still open problems, connected with the existence of ample and nef divisors on a $\Q$--factorial complete toric variety. The first problem is about the existence of ample divisors when the Picard number is 2: we give a positive answer to this question, by studying the secondary fan by means of $\Z$--linear Gale duality. The second problem is about the minimum value of the Picard number allowing the vanishing  of the $\Nef$ cone: we present a 3--dimensional example showing that this value cannot be greater then 3, which, under the previous result, is also the minimum value guaranteeing the existence of non--projective examples.
\end{abstract}

\maketitle

\tableofcontents

\section*{Introduction}

It is classically well known that a complete toric variety of dimension $\leq 2$ is projective \cite[\S\,8, Prop.\,8.1]{MO}. For higher dimension this  fact is not true, as shown by lot of counterexamples, the first of which was given by M.\,Demazure \cite{Demazure}.  For smooth complete toric varieties, P.\,Kleinschmidt and B.\,Sturmfels \cite{Kleinschmidt-Sturmfels} proved that for Picard number (in the following also called the \emph{rank}) $r\leq 3$ they are projective in every dimension. This result cannot be extended to higher values of the rank, as shown by a famous example given in Oda's book \cite[p.84]{Oda}: this is a smooth complete 3--dimensional toric variety $X$ of rank $r=4$, such that $\dim(\Nef(X))=2$ inside $\Cl(X)\otimes\R\cong\R^4$: therefore $X$ admits non--trivial numerically effective classes (among which the anti--canonical one) but does not admit any ample class.

\noindent When dropping the smoothness hypothesis, Kleinschmidt--Sturmfels bound does no longer hold even for $\Q$--factorial singularities: a counterexample has been given by F.\,Berchtold and J.\,Hausen \cite[Ex.\,10.2]{Berchtold-Hausen} who produced a $\Q$--factorial complete 3--dimensional toric variety $X$ of rank $r=3$ such that $\Nef(X)$ is a 1--dimensional cone generated by the anti--canonical class. This example is essentially produced by suppressing a fan generator in the Oda's example (see Remark \ref{rem:}).

\noindent Nevertheless, a $\Q$--factorial complete toric variety of rank $r=1$ is always projective as a quotient of a weighed projective space (so called a \emph{fake} WPS) \cite{Conrads}, \cite{BC}. It is then natural to ask what happens for $r=2$: surprisingly, the current literature does not seem to give any answer to this question, at least as far as we know. The first and main result we are going to present in this paper is a extension of the Kleinschmidt--Sturmfels result to $\Q$--factorial setup, by decreasing the bound on the rank $r$, namely

\halfline
\noindent {\bf Theorem \ref{teo:erredueproj}} \emph{Every $\Q$--factorial complete toric variety of Picard number $r\leq 2$ is projective.}

\halfline \noindent Our proof is essentially obtained by generalizing a Berchtold--Hausen's argument proving the last part of \cite[Prop.\,10.1]{Berchtold-Hausen}, by means of $\Z$--linear Gale duality, as de\-ve\-lo\-ped in \cite{RT-LA&GD} and \cite{RT-QUOT}. In fact $\Z$--linear Gale duality allows us to observe the general fact, summarized by Lemma~\ref{lem:DUEparte}, about the mutual position of pairs of cones Gale dually associated with pairs of maximal cones of the fan admitting a common facet. When the rank is 2, Lemma~\ref{lem:DUEparte} gives rise to strong consequences proving Theorem \ref{teo:erredueproj}. This proof turns out to be very easy and we believe this is a further reason of interest.

\halfline
A second result of the present paper is proposed in the last section \S\,\ref{sez:esempio}, by exhi\-bi\-ting an example of $\Q$--factorial complete and non--projective 3--dimensional toric variety $X$ with rank $r=3$ and \emph{not admitting any non--trivial numerically effective divisor} i.e. such that $\Nef(X)=0$. More precisely, Theorem \ref{teo:erredueproj} implies that
\begin{equation}\label{nef0Q}
\Nef(X)=0\,\Rightarrow\,r\geq 3\,.
\end{equation}
Example given in \S\,\ref{sez:esempio} then shows that condition (\ref{nef0Q}) is sharp. Actually this is not a new example. In fact O.\,Fujino and S.\,Payne proved that $\Nef(X)=0\,\Rightarrow\,r\geq 5$, for a smooth and complete toric 3--fold $X$ \cite{FP}: their Example 1 is obtained by blowing up a complete toric variety which turns out to be isomorphic to the one given in \S\,\ref{sez:esempio}. This example jointly with Theorem \ref{teo:erredueproj} allows us to conclude the sharpness of condition (\ref{nef0Q}), which is a new result.
 Let us finally notice that this example can be produced by \emph{breaking the symmetry} of the above mentioned Berchtold--Hausen example, just \emph{deforming} one generator of the effective cone $\Eff(X)$ inside $\Cl(X)\otimes\R$. For more details see Remark \ref{rem:}.

\section{A general Lemma for arbitrary Picard number}
In the present paper we deal with $\Q$--factorial complete toric varieties associated with simplicial and complete fans. For preliminaries and used notation on toric varieties we refer the reader to \cite[\S~1.1]{RT-LA&GD} and \cite[\S 1]{RT-QUOT}. We will also apply $\Z$--linear Gale duality as developed in \cite[\S~3]{RT-LA&GD}.
Each time the needed nomenclature will be recalled either directly by giving the necessary definition or by reporting the precise reference.

Let $X(\Si)$ be a $\Q$--factorial complete toric variety of dimension $n$ and Picard number $r:=\rk(\Pic(X))$ (in the following also called the \emph{rank of $X$}, $r=\rk(X)$). Then $\Si$ is a rational, simplicial and complete fan in $N_{\R}:=N\otimes\R$, where $N$ is the dual of the group of characters of the acting torus $T\cong(\C^*)^n$. Its 1--skeleton is given by $\Sigma(1)=\{\langle\v_1\rangle,\ldots,\langle\v_{n+r}\rangle\}$, where $\v_i\in N$ is a generator of the monoid associated with the  corresponding ray $\langle\v_i\rangle$. The $n\times (n+r)$ integer matrix $V=(\v_1,\ldots,\v_{n+r})$ will be called \emph{a fan matrix} of $X$ and it turns out to be a reduced $F$--matrix, in the sense of \cite[Def.~3.10]{RT-LA&GD} and \cite[Def.~1.2]{RT-QUOT}.

In the following, given a $d\times m$ integer matrix $A\in\mathbf{M}(d,m;\Z)$ and a subset $I\subseteq\{1,\ldots,m\}$, $A_I$ will denote the submatrix of $A$ given by the columns indexed by $I$, and $A^I$ will denote the submatrix of $A$ whose columns are indexed by the complementary subset $\{1,\ldots,m\}\backslash I$.

A maximal cone $\s\in\Si(n)$ can be identified with a maximal rank $n\times n$ submatrix $V_J=V^I$ of the fan matrix $V$, assigned by all the columns of $V$ generating some ray in the 1-skeleton $\s(1)$ of $\s$: in particular $J=\{j_1,\ldots,j_n\}\subset\{1,\ldots,n+r\}$ denotes the position of those columns inside $V$, while $I=\{1,\ldots,n+r\}\setminus J$ is the complementary subset. We will also write $\s=\s_J=\langle V_J\rangle=\langle V^I\rangle=\s^I$.

Let $Q=(\q_1,\ldots,\q_{n+r})$ be a maximal rank $r\times (n+r)$ \emph{Gale dual} matrix of $V$ i.e. such that $Q\cdot V^T=0$ \cite[\S~3.1]{RT-LA&GD}, where $V^T$ denotes the transposed matrix of $V$. Define the following set of \emph{dual} cones inside $F^r_{\R}=\Cl(X)\otimes\R$:
\begin{equation*}
  \B_{\Si}:=\{\langle Q_I\rangle\subseteq F^r_{\R}\ |\ \langle V^I\rangle\in\Si(n)\}\,
\end{equation*}
where $\langle Q_I\rangle$ is the cone generated by the columns of the submatrix $Q_I$.
$\B_{\Si}$ turns out to be a  \emph{bunch of cones} in the sense of \cite{Berchtold-Hausen} (see also \cite[p.~738]{CLS}). In particular

\begin{theorem}\label{thm:camera} \cite[Prop.15.2.1(c)]{CLS} In the above notation, the \ka cone of the $\Q$--factorial complete toric variety $X(\Si)$ is given by
$$\Nef(X)=\g:=\bigcap_{\b\in\B_{\Si}}\b\,.$$
In particular $X$ is projective if and only $\dim(\g)=r$.

\end{theorem}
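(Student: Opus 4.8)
The plan is to describe, for an arbitrary class $\w\in F^r_\R=\Cl(X)\otimes\R$, what it means for $\w$ to be nef, and to recognise the resulting local condition at each maximal cone as membership of $\w$ in the Gale dual cone $\langle Q_I\rangle$. First I would fix a representative $D=\sum_{i=1}^{n+r}a_i D_i$ of $\w$, where $D_i$ is the torus invariant prime divisor attached to $\langle\v_i\rangle$, and recall the standard toric criterion for nefness via support functions (see \cite{CLS}): since $\Si$ is simplicial every such $D$ is $\Q$--Cartier, and $\w$ is nef if and only if, for every maximal cone $\s_J=\langle V^I\rangle\in\Si(n)$, the unique character $\m_J\in M_\R$ determined by $\langle\m_J,\v_j\rangle=-a_j$ for $j\in J$ satisfies $\langle\m_J,\v_i\rangle+a_i\geq 0$ for all $i$. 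A quick check shows that this inequality is unchanged when $D$ is replaced by $D-\operatorname{div}(\chi^{\m})$, so each such local condition depends only on the class $\w$ and may be tested on any convenient representative.

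For a fixed maximal cone $\s_J=\langle V^I\rangle$ I would then normalise. Since $V_J=V^I$ has maximal rank $n$, I can subtract a suitable principal divisor $\operatorname{div}(\chi^{\m})$ to arrange $a_j=0$ for all $j\in J$; for this representative the support character is $\m_J=\0$ and the local nefness inequality at $\s_J$ reduces to $a_i\geq 0$ for every $i\in I$. In the Gale dual picture one has $\w=\sum_{i\in I}a_i\,\q_i$, because the columns $\q_j$ with $j\in J$ now carry coefficient $0$. The decisive input from $\Z$--linear Gale duality is that the complementary block $Q_I$ is an invertible $r\times r$ matrix precisely when $V^I$ is invertible; hence $\{\q_i:i\in I\}$ is a basis of $F^r_\R$, the coordinates $a_i$ of $\w$ in this basis are uniquely determined, and the condition $a_i\geq 0$ for all $i\in I$ is exactly $\w\in\langle Q_I\rangle$. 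Thus local nefness at $\s_J$ is equivalent to $\w\in\langle Q_I\rangle$.

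Intersecting over all maximal cones, $\w$ is nef if and only if $\w\in\langle Q_I\rangle$ for every $\langle V^I\rangle\in\Si(n)$, i.e. $\w\in\bigcap_{\b\in\B_\Si}\b=\g$; this yields $\Nef(X)=\g$. For the final assertion I would invoke that on a complete toric variety the ample classes are exactly the strictly convex support functions, which constitute the interior of $\Nef(X)=\g$ inside $F^r_\R$. Hence $X$ is projective if and only if $\g$ has nonempty interior, and since $\g$ is a cone in $F^r_\R\cong\R^r$ this holds if and only if $\dim(\g)=r$.

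I expect the heart of the argument to be the Gale duality step: proving that $Q_I$ is invertible exactly when $V^I$ is, and correctly identifying the normalised divisor coefficients with the coordinates of $\w$ in the basis $\{\q_i\}_{i\in I}$, so that the local nefness inequalities translate verbatim into the cone memberships $\w\in\langle Q_I\rangle$. Once the sign conventions in the support function criterion are pinned down and the representative independence of each local condition is checked, the passage from the local conditions to the global identity $\Nef(X)=\g$ is purely formal.
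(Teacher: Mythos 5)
Your proof is correct, but there is nothing in the paper to compare it against line by line: the paper does not prove Theorem \ref{thm:camera} at all, it imports it wholesale from \cite[Prop.\,15.2.1(c)]{CLS}. What you have written is essentially a self-contained reconstruction of the standard argument behind that citation (the secondary fan/bunch-of-cones description of $\Nef$), and every step checks out. The local nef criterion at $\s_J=\langle V^I\rangle$ is indeed representative-independent (your computation amounts to $\m_J\mapsto\m_J+\m$ cancelling the shift $a_i\mapsto a_i-\langle\m,\v_i\rangle$); the normalization $a_j=0$ for $j\in J$ is legitimate because $V^I$ has rank $n$; and the decisive Gale-duality fact, that $Q_I$ is invertible exactly when $V^I$ is, is genuine and takes one line: if $x^TQ_I=0$ then $y=x^TQ$ lies in $\ker(V)\otimes\R$ with support in $J$, so $V^I\,y^T=0$ forces $y=0$ and hence $x=0$ since $\rk Q=r$, with the converse symmetric. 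Since $Q\cdot V^T=0$ makes the class of $\sum_i a_iD_i$ in $F^r_{\R}$ equal to $\sum_i a_i\q_i$, your identification of the normalized coefficients with the coordinates of $\w$ in the basis $\{\q_i\,|\,i\in I\}$ is exact, and $\Nef(X)=\g$ follows formally. The only place where you quote rather than prove is the last step, that ample classes form the interior of $\Nef(X)$ on a complete (not a priori projective) toric variety; this is the strict-convexity criterion \cite[Thm.\,6.3.22]{CLS}, and it is worth noting that your own normalization makes it transparent here: if $\dim\g=r$ then $\operatorname{int}\g\subseteq\operatorname{int}\langle Q_I\rangle$ for every $I\in\mathcal{I}_{\Si}$ (each $\langle Q_I\rangle$ is full-dimensional and contains $\g$), which in your coordinates reads $a_i>0$ for all $i\in I$, i.e.\ strict convexity of the support function; moreover, since $\g$ is rational polyhedral, a full-dimensional $\g$ contains a \emph{rational} interior class, yielding an ample $\Q$--Cartier divisor and hence projectivity, while conversely an ample class lies in $\bigcap_I\operatorname{int}\langle Q_I\rangle$, which is open and contained in $\g$, so $\dim\g=r$. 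In short: the paper buys the statement with a citation; your route buys self-containedness at the price of two standard toric inputs (the support-function nef criterion and the strict-convexity ampleness criterion), both of which you invoke correctly.
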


Let us set
\begin{equation*}
\mathcal{I}_{\Si}=\{I\subset\{1,\ldots,n+r\}\ |\ \langle V^I\rangle\in \Si(n)\}=\{I\subset\{1,\ldots,n+r\}\ | \ \langle Q_I\rangle\in\B_{\Si}\}\,.
\end{equation*}
Then we get the following:

\begin{lemma}\label{lem:DUEparte} With the notation introduced above:
\begin{itemize}
\item[a)] if $I\in\mathcal{I}_\Sigma$ then for every $j\not\in I$ there exists a unique $k\in I$ such that
\begin{equation}
\label{eq:Iprime}
I'=(I\setminus \{k\})\cup \{j\} \in \mathcal{I}_{\Si}
\end{equation}
\item[b)] let $I, I'$ be as in part a): then the vectors $\mathbf{q}_j$ and $\mathbf{q}_k$ lie on the same side with respect to the hyperplane $H\subset F_{\R}$ generated by $\{\mathbf{q}_{i} \ |\ i\in I\setminus \{k\}\}$.
\end{itemize}
\end{lemma}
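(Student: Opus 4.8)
The plan is to translate both statements into the geometry of the fan $\Si$ and then transport the relevant sign information across Gale duality. Throughout I identify $I\in\mathcal{I}_\Si$ with the maximal cone $\s=\langle V^I\rangle=\langle V_J\rangle$, where $J=\{1,\dots,n+r\}\setminus I$ indexes its $n$ generating rays, recalling that by Gale duality $Q_I$ is nonsingular (equivalently $\langle Q_I\rangle$ is $r$--dimensional) precisely when $V_J$ is nonsingular, i.e. when $I\in\mathcal{I}_\Si$.

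For part a), fix $j\notin I$, so $j\in J$, and set $T=J\setminus\{j\}$. Then $\tau=\langle\v_i\,|\,i\in T\rangle$ is a facet of the simplicial cone $\s$. Since $\Si$ is complete, $\tau$ is a wall, i.e. a facet of exactly two maximal cones; one of them is $\s$, and I call $\s'$ the other. Being a maximal simplicial cone with $\tau$ as a facet, $\s'$ is generated by the rays of $\tau$ together with exactly one further ray $\v_k$, so $\s'=\langle V_{T\cup\{k\}}\rangle$ with $k\neq j$ and $k\notin T$; as $k\notin J$ this gives $k\in I$. A direct index computation shows that the complement of $T\cup\{k\}$ is exactly $I'=(I\setminus\{k\})\cup\{j\}$, whence $I'\in\mathcal{I}_\Si$. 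Uniqueness of $k$ follows from the uniqueness of the adjacent cone $\s'$: any $k'\in I$ with $(I\setminus\{k'\})\cup\{j\}\in\mathcal{I}_\Si$ produces a maximal cone $\langle V_{T\cup\{k'\}}\rangle\neq\s$ having $\tau$ as a facet, hence equal to $\s'$, forcing $k'=k$.

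For part b), the key object is the wall relation attached to the circuit $T\cup\{j,k\}$. Since $T\cup\{j\}$ and $T\cup\{k\}$ are each bases of $N_\R$, the $n+1$ vectors $\{\v_i\,|\,i\in T\cup\{j,k\}\}$ satisfy a unique relation, up to scale,
\begin{equation*}
a_j\v_j+a_k\v_k+\sum_{i\in T}a_i\v_i=0,
\end{equation*}
with $a_j,a_k\neq0$ (a vanishing $a_j$ or $a_k$ would produce a dependence inside one of the two bases). Choosing a linear functional $\psi$ on $N_\R$ with kernel $\mathrm{span}(\tau)$, the fact that $\s$ and $\s'$ lie on opposite sides of this hyperplane---they are distinct maximal cones of a fan meeting only along the wall $\tau$, so their relative interiors cannot lie on the same side---gives $\psi(\v_j)$ and $\psi(\v_k)$ opposite signs; applying $\psi$ to the relation yields $a_j\psi(\v_j)+a_k\psi(\v_k)=0$, so $a_j$ and $a_k$ have the \emph{same} sign.

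It remains to move this sign coincidence to the $Q$--side. The relation vector $\mathbf{a}=(a_1,\dots,a_{n+r})$ lies in $\ker V$, which coincides with the row space of $Q$ (both have dimension $r$, and $QV^T=0$ forces the rows of $Q$ into $\ker V$); hence $\mathbf{a}=Q^T\mathbf{c}$ for a unique $\mathbf{c}\in\R^r$, i.e. $a_l=\langle\mathbf{c},\q_l\rangle$ for every $l$. Because $\mathbf{a}$ is supported on $T\cup\{j,k\}=J\cup\{k\}$, which is disjoint from $I\setminus\{k\}$, we get $a_l=0$ for all $l\in I\setminus\{k\}$, so the nonzero vector $\mathbf{c}$ is orthogonal to each $\q_i$ with $i\in I\setminus\{k\}$ and therefore cuts out the hyperplane $H$. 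Then $\langle\mathbf{c},\q_k\rangle=a_k$ and $\langle\mathbf{c},\q_j\rangle=a_j$ share the sign established above, which is exactly the assertion that $\q_j$ and $\q_k$ lie on the same side of $H$. The step I expect to be the crux is this transfer: the geometric input is the \emph{opposite}--sides wall condition in $N_\R$, and the content of the lemma is that Gale duality turns it into a \emph{same}--side condition in $F_\R$, the apparent sign flip being accounted for by reading the single vector $\mathbf{a}$ both as a dependence among the $\v_i$ and as the linear form $\langle\mathbf{c},-\rangle$ evaluated on the $\q_i$.
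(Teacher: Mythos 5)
Your proof is correct and takes essentially the same route as the paper's: part a) via the adjacency of the two maximal cones along the wall $\tau$, and part b) by reading the wall relation as an element of $\ker V$ equal to the row space of $Q$, whose coordinate vector $\mathbf{c}$ is the normal to $H$. The only cosmetic difference is that you obtain the same--sign condition on $a_j,a_k$ by evaluating a functional vanishing on $\mathrm{span}(\tau)$, whereas the paper gets it by intersecting the cone $\langle\v_j,\v_k\rangle$ with that hyperplane; these are equivalent.
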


\begin{proof} a) This is the dual assertion of the fact that  every facet of $\Sigma$ belongs to exactly two maximal cones of $\Sigma$. Since $j\not \in I$, $\mathbf{v}_j \in \langle V^I\rangle $. Then the facet $\tau$ of $\langle V^I\rangle $ opposite to $\mathbf{v}_j$ belongs to exactly one other maximal cone $ \langle V^{I'}\rangle $. If $\mathbf{v}_{k}$ is the vector opposite to $\tau$ in $ \langle V^{I'}\rangle $ then (\ref{eq:Iprime}) holds.\\

b) Let $L\subset N_{\R}$ be the hyperplane supporting the common face $\tau=\langle V^I\rangle\cap \langle V^{I'}\rangle$: it is generated by vectors $\v_t\,,\,t\not\in I\cup\{j\}$. Since $\mathbf{v}_j$ and
$\mathbf{v}_k$ lie on opposite sides of $L$, the cone generated by $\mathbf{v}_j$ and $\mathbf{v}_k$ intersects $L$ giving a relation
\begin{equation}\label{relazione}
  \lambda_j\v_j+\lambda_k\v_k=\sum_{t\not\in I\cup\{j\}} \lambda_{t}\mathbf{v}_{t}
\end{equation}
such that $\lambda_{j},\lambda_k >0$. Therefore in the sublattice $\mathcal{L}_r(Q)\subseteq\Z^{n+r}$, spanned by the rows of $Q$, there is a vector having positive entries at places $j$ and $k$ and $0$ at places in $I\setminus \{k\}$. It corresponds to a vector $$\n\in\R^r=\Hom(\Cl(X),\R))\,:\ \forall\,i\in I\setminus\{k\}\ \n\cdot \mathbf{q}_i=0\ \text{and}\ \mathbf{n}\cdot \mathbf{q}_j>0\,,\,\mathbf{n}\cdot \mathbf{q}_k>0$$
which is clearly a normal vector to the hyperplane $H\subset F^r_{\R}$.
\end{proof}

\begin{example} To better understand the argument proving Lemma \ref{lem:DUEparte} it may be of some help considering the easy example given by $X=\P^1\times\P^1$. A fan matrix $V$ of $X$ and a Gale dual $Q$ of $V$ are the following
$$V=(\v_1,\v_2,\v_3,\v_4)=\left(
                            \begin{array}{cccc}
                              1 & -1 & 0 & 0 \\
                              0 & 0 & 1 & -1 \\
                            \end{array}
                          \right)\ \Rightarrow\ Q=\left(
                                                    \begin{array}{cccc}
                                                      1 & 1 & 0 & 0 \\
                                                      0 & 0 & 1 & 1 \\
                                                    \end{array}
                                                  \right)\ .
$$
In this case $\mathcal{I}_{\Si}=\{\{2,4\},\{1,4\},\{1,3\},\{2,3\}\}$ and part a) is clear. Let us now consider the two maximal cones $\langle\v_1,\v_3\rangle=\left\langle V^{\{2,4\}}\right\rangle$ and $\langle\v_2,\v_3\rangle=\left\langle V^{\{1,4\}}\right\rangle$, whose common facet is given by $\tau=\langle\v_3\rangle$. In the notation of Lemma~\ref{lem:DUEparte} one has
$I=\{2,4\},\ I'=\{1,4\},\ j=1,\ k=2$: the argument proving part b) gives $t=3$ and relation (\ref{relazione}) is given by
$$1\cdot\v_1+1\cdot\v_2=0\cdot\v_3\quad\Rightarrow\quad \l_1=\l_2=1>0\ .$$
The associated vector in $\mathcal{L}_r(Q)$ turns out to be the first row of $Q$, hence giving $\n=\q_1=\q_2=(1\ 0)^T$ lying on the same side with respect to the hyperplane (i.e. the line) $H$ spanned by the fourth column $\q_4=(0\ 1)^T$ of $Q$.
\end{example}
\begin{figure}
\begin{center}
\includegraphics[width=9truecm]{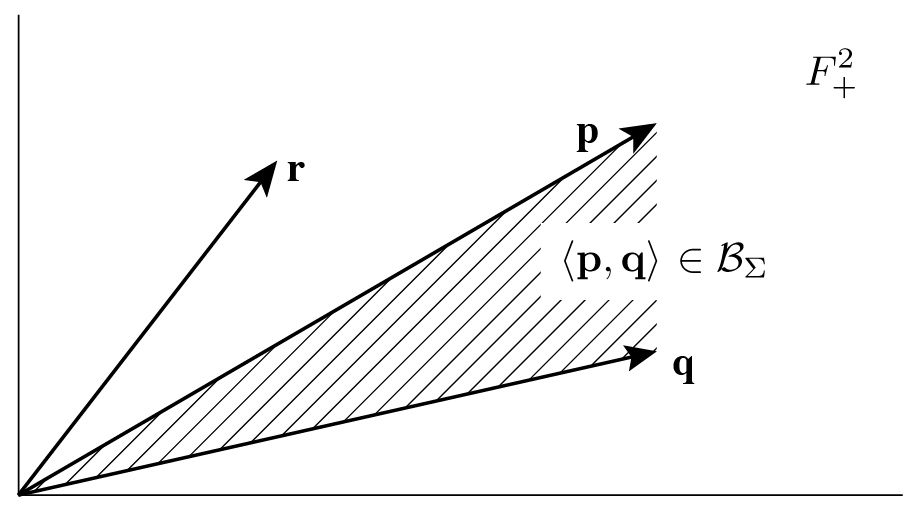}
\caption{\label{Fig1}Cones of the bunch $\B_{\Si}$ as in Lemma \ref{lem:pqr}.b)}
\end{center}
\end{figure}

\section{The case of Picard number $r=2$}

Given a simplicial and complete fan $\Si$, if $r=2$ then $|I|=2$ for every $I\in\mathcal{I}_{\Si}$. The following is a direct application of Lemma \ref{lem:DUEparte}.

\begin{lemma}\label{lem:pqr} Let $\mathbf{p},\mathbf{q},\mathbf{r}$ be three distinct vectors giving as many columns of $Q$, such that $\langle \mathbf{p},\mathbf{q}\rangle \in\B_{\Si}$.
\begin{itemize}
\item[a)] exactly one between $\langle \mathbf{p},\mathbf{r}\rangle$ and  $\langle \mathbf{q},\mathbf{r}\rangle$ is in $\B_{\Si}$.
\item[b)] if $\mathbf{p}\in \langle \mathbf{q},\mathbf{r}\rangle$ then $\langle \mathbf{q},\mathbf{r}\rangle\in \B_{\Si}$ and $\langle \mathbf{p},\mathbf{r}\rangle\not\in \B_{\Si}$.
\end{itemize}
\end{lemma}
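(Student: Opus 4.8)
The plan is to derive both parts directly from Lemma~\ref{lem:DUEparte}, specialized to $r=2$, where each $I\in\mathcal{I}_\Si$ has exactly two elements and each cone $\langle Q_I\rangle$ is generated by two columns of $Q$. The key simplification is that when $|I|=2$ the hyperplane $H$ of Lemma~\ref{lem:DUEparte}.b) is spanned by a single vector $\q_i$ with $i\in I\setminus\{k\}$, i.e. it is the ray through one of the two generators of the starting cone.

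For part a), I would start from $\langle\pp,\q\rangle\in\B_\Si$, writing $I=\{p,q\}$ for the index set of the columns $\pp=\q_p$ and $\q=\q_q$. Let $j$ be the index of $\r$; since $\r$ is a column of $Q$ distinct from $\pp,\q$, we have $j\notin I$. Lemma~\ref{lem:DUEparte}.a) then produces a \emph{unique} $k\in I$ with $I'=(I\setminus\{k\})\cup\{j\}\in\mathcal{I}_\Si$. Because $I=\{p,q\}$, the choice $k=p$ yields $I'=\{q,j\}$, i.e. $\langle\q,\r\rangle\in\B_\Si$, while $k=q$ yields $I'=\{p,j\}$, i.e. $\langle\pp,\r\rangle\in\B_\Si$; uniqueness of $k$ says exactly one of these two alternatives occurs. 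This gives part a) with essentially no computation: it is a clean rephrasing of the ``two maximal cones share a facet'' statement on the dual side.

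For part b), I would use part a) together with the sidedness conclusion of Lemma~\ref{lem:DUEparte}.b). Assume $\pp\in\langle\q,\r\rangle$; I want to rule out $\langle\pp,\r\rangle\in\B_\Si$ and conclude $\langle\q,\r\rangle\in\B_\Si$ by part a). Suppose for contradiction that $\langle\pp,\r\rangle\in\B_\Si$, so the transition from $I=\{p,q\}$ to $I'=\{p,j\}$ is realized by removing $k=q$ and adding $j$. Then Lemma~\ref{lem:DUEparte}.b) asserts that $\q=\q_q$ and $\r=\q_j$ lie on the same side of the line $H$ generated by $\{\q_i : i\in I\setminus\{k\}\}=\{\q_p\}$, i.e. the line through $\pp$. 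But the hypothesis $\pp\in\langle\q,\r\rangle$ means $\pp$ is a nonnegative combination of $\q$ and $\r$; combined with $\q,\r$ being on one side of the line $\R\pp$, this forces $\q$ and $\r$ to lie in the closed half-line-bounded region in a way incompatible with $\pp$ sitting strictly between their rays on the line itself—the contradiction I expect to make precise is that $\pp$ on the line $H$ cannot be a positive combination of two vectors strictly on the same open side of $H$.

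The main obstacle is making the two-dimensional sidedness argument in part b) airtight: I must pin down that $\pp\in\langle\q,\r\rangle$ means $\pp=a\q+b\r$ with $a,b\ge 0$ (not both zero), and that ``same side of the line through $\pp$'' is genuinely an \emph{open} half-plane condition, so that a positive combination of two vectors in that open half-plane cannot land on the bounding line unless both coefficients vanish. I would handle this by applying the normal vector $\n$ produced in the proof of Lemma~\ref{lem:DUEparte}.b), which satisfies $\n\cdot\q_p=0$ and $\n\cdot\q_q>0$, $\n\cdot\q_j>0$ (in the contradiction scenario with $k=q$, so $\n\cdot\pp=0$ and $\n\cdot\q>0$, $\n\cdot\r>0$): pairing $\n$ with $\pp=a\q+b\r$ gives $0=\n\cdot\pp=a(\n\cdot\q)+b(\n\cdot\r)$, a sum of nonnegative terms with positive weights, forcing $a=b=0$ and hence $\pp=\0$, contradicting that $\pp$ generates a ray of a cone in $\B_\Si$. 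This contradiction eliminates $\langle\pp,\r\rangle\in\B_\Si$, and part a) then delivers $\langle\q,\r\rangle\in\B_\Si$, completing the proof.
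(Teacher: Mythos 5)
Your proof is correct and follows essentially the same route as the paper: part a) is read off from the existence and uniqueness of $k$ in Lemma~\ref{lem:DUEparte}.a), and part b) rules out $\langle\pp,\r\rangle\in\B_{\Si}$ via the same--side conclusion of Lemma~\ref{lem:DUEparte}.b). The only difference is that where the paper dismisses the case $Q_{I'}=\langle\pp,\r\rangle$ with a one--line appeal to the figure, you make the contradiction explicit by pairing the normal vector $\n$ (with $\n\cdot\pp=0$, $\n\cdot\q>0$, $\n\cdot\r>0$) against $\pp=a\q+b\r$, which is a welcome tightening of the same argument.
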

\begin{proof}
a) This is a direct consequence of Lemma \ref{lem:DUEparte}.a) by setting $Q_I=\langle \pp,\q\rangle$ and $\q_j=\r$.

b) If $\pp\in\langle \q,\r\rangle$ then $\mathbf{r}\notin \langle \mathbf{p},\mathbf{q}\rangle$ (see Fig. \ref{Fig1}). By a) exactly one between $\langle \mathbf{p},\mathbf{r}\rangle$ and $\langle \mathbf{q},\mathbf{r}\rangle$ belongs to $\B_{\Si}$ and either $Q_{I'}=\langle\pp,\r\rangle$ or $Q_{I'}=\langle\q,\r\rangle$. By Lemma \ref{lem:DUEparte}.b) the former cannot occur, meaning that $Q_{I'}=\langle\q,\r\rangle\in\B_{\Si}$.
\end{proof}

We are now in a position to prove the main result of the present paper.

\begin{theorem}
\label{teo:erredueproj}
Every $\Q$-factorial  complete toric variety with Picard number $r=2$ is projective.
\end{theorem}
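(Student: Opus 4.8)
The plan is to invoke Theorem~\ref{thm:camera}, which identifies $\Nef(X)$ with $\g=\bigcap_{\b\in\B_\Si}\b$ and thereby reduces projectivity to the single statement $\dim(\g)=2$. So the whole task becomes showing that the cones of the bunch $\B_\Si$, each of which is a $2$--dimensional (full--dimensional) cone in $F^2_{\R}\cong\R^2$, intersect in a cone with non--empty interior. Since $X$ is complete, $\Eff(X)=\langle\mathbf{q}_1,\ldots,\mathbf{q}_{n+2}\rangle$ is a pointed full--dimensional cone; hence all the columns $\mathbf{q}_i$ lie strictly inside a half--plane and can be linearly ordered by their angular coordinate. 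In this picture every $\b=\langle\mathbf{q}_i,\mathbf{q}_j\rangle\in\B_\Si$ becomes a closed angular interval of width $<\pi$, and $\g$ is the intersection of all these intervals.

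Working with intervals on a line, $\g$ fails to be $2$--dimensional exactly when the largest left endpoint is $\geq$ the smallest right endpoint; choosing cones realizing these two extreme endpoints then produces two cones of $\B_\Si$ whose interiors are disjoint. Thus, by a one--dimensional Helly--type remark, it suffices to prove that \emph{any two cones of $\B_\Si$ have overlapping interiors}. First I would reduce to a normal form: two cones with disjoint interiors can be written $\langle\mathbf{q}_a,\mathbf{q}_b\rangle$ and $\langle\mathbf{q}_c,\mathbf{q}_d\rangle$ with angular order $\alpha_a<\alpha_b\leq\alpha_c<\alpha_d$ (the strict inequalities because each cone is genuinely $2$--dimensional, the middle one because the interiors do not meet).

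The core of the argument is then a double application of Lemma~\ref{lem:pqr}.b). Applying it to $\langle\mathbf{q}_a,\mathbf{q}_b\rangle\in\B_\Si$ with the outside vector $\mathbf{q}_d$ --- legitimate because $\mathbf{q}_b\in\langle\mathbf{q}_a,\mathbf{q}_d\rangle$ --- forces $\langle\mathbf{q}_a,\mathbf{q}_d\rangle\in\B_\Si$ and $\langle\mathbf{q}_b,\mathbf{q}_d\rangle\notin\B_\Si$. Applying it instead to $\langle\mathbf{q}_c,\mathbf{q}_d\rangle\in\B_\Si$ with the outside vector $\mathbf{q}_b$ --- legitimate because $\mathbf{q}_c\in\langle\mathbf{q}_b,\mathbf{q}_d\rangle$ --- forces $\langle\mathbf{q}_b,\mathbf{q}_d\rangle\in\B_\Si$. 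These two conclusions contradict each other, so no two cones of the bunch can be separated, whence $\dim(\g)=2$ and $X$ is projective.

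I expect the main obstacle to be the careful handling of the boundary and degenerate configurations rather than the central contradiction, which is very short. Precisely, I must justify that every cone of $\B_\Si$ is honestly $2$--dimensional (so that $\alpha_a<\alpha_b$), that $\Eff(X)$ is pointed (so that the angular/interval picture is available at all), and the borderline case $\alpha_b=\alpha_c$ in which $\mathbf{q}_b$ and $\mathbf{q}_c$ span the same ray: here I would first use Lemma~\ref{lem:pqr}.a), together with the fact that a pair of parallel vectors cannot generate a cone of the bunch, to replace one generator and reduce to the case of two bunch cones sharing a generator, where the same Lemma~\ref{lem:pqr} argument again yields a contradiction.
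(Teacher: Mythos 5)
Your proof is correct and takes essentially the same route as the paper: the decisive step in both is the double application of Lemma \ref{lem:pqr}.b) to two angularly separated cones of the bunch, yielding the contradiction $\langle\q_b,\q_d\rangle\notin\B_{\Si}$ versus $\langle\q_b,\q_d\rangle\in\B_{\Si}$. The only organizational difference is that you reduce to pairwise overlap of bunch cones via a one--dimensional Helly remark, whereas the paper first extracts a \emph{minimal} chamber by iterating Lemma \ref{lem:pqr}.a) and then shows it is contained in every cone of the bunch; the degenerate configurations you flag (pointedness of $\Eff(X)$, full--dimensionality of the bunch cones, parallel columns) are genuine but minor checks and are handled exactly as you propose.
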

\begin{proof} Consider a cone $\langle \pp,\q\rangle \in \B_{\Si}$. If there exists a further column $\r$ of $Q$ such that $\r\in\Relint(\langle\pp,\q\rangle)$ then Lemma \ref{lem:pqr}.a) ensures that either $\langle \pp,\r\rangle$ or $\langle\q,\r\rangle$ belongs to $\B_{\Si}$. Then, by repeatedly applying Lemma \ref{lem:pqr}.a), we get a \emph{minimal} cone $\g\in\B_{\Si}$, in the sense that there are no columns of $Q$ in $\Relint(\g)$.

\noindent We can then assume $\gamma=\langle \mathbf{p},\mathbf{q}\rangle \in\B_{\Si}$ is a minimal chamber: we claim that $\gamma$ is contained in every cone $\b\in \B_{\Si}$.

\noindent In fact, if one of $\mathbf{p},\mathbf{q}$ is a ray of $\b$  then, up to renaming $\pp$ and $\q$, we can assume $\b=\langle\q,\r\rangle$. Lemma \ref{lem:DUEparte}.b) guarantees that either $\pp\in\b$ and we are done or $\r\in\g$, against the minimality of $\g$. Then we can assume $\b=\langle \mathbf{r},\mathbf{s}\rangle$, with $\{\mathbf{p},\mathbf{q}\}\not=\{\mathbf{r},\mathbf{s}\}$. Assume $\gamma\not\subset \b$. By the minimality of $\gamma$, the cone generated by $\mathbf{p},\mathbf{q},\mathbf{r},\mathbf{s}$ has a ray in $\{\mathbf{p},\mathbf{q}\}$ and a ray in $\{\mathbf{r},\mathbf{s}\}$. We may then assume $\mathbf{p},\mathbf{s}\in \langle \mathbf{q},\mathbf{r}\rangle$, as in Fig. \ref{Fig2}. Consider the three vectors $\q,\pp,\ss$: since $\g=\langle\pp,\q\rangle\in\B_{\Si}$ and $\pp\in\langle\q,\ss\rangle$, Lemma \ref{lem:pqr}.b) ensures that $\langle\q,\ss\rangle\in\B_{\Si}$. Finally consider the three vectors $\q,\ss,\r$: since both $\langle \q,\ss\rangle$ and $\b=\langle\r,\ss\rangle$ belong to
$\B_{\Si}$ we get a contradiction with Lemma \ref{lem:DUEparte}.b).

\noindent Then, recalling Theorem \ref{thm:camera},
$\Nef(X(\Si))=\bigcap_{\b\in\B_{\Si}}\b=\g $ is a 2--dimensional cone, meaning that $X$ is projective.
\end{proof}

\begin{figure}
\begin{center}
\includegraphics[width=9truecm]{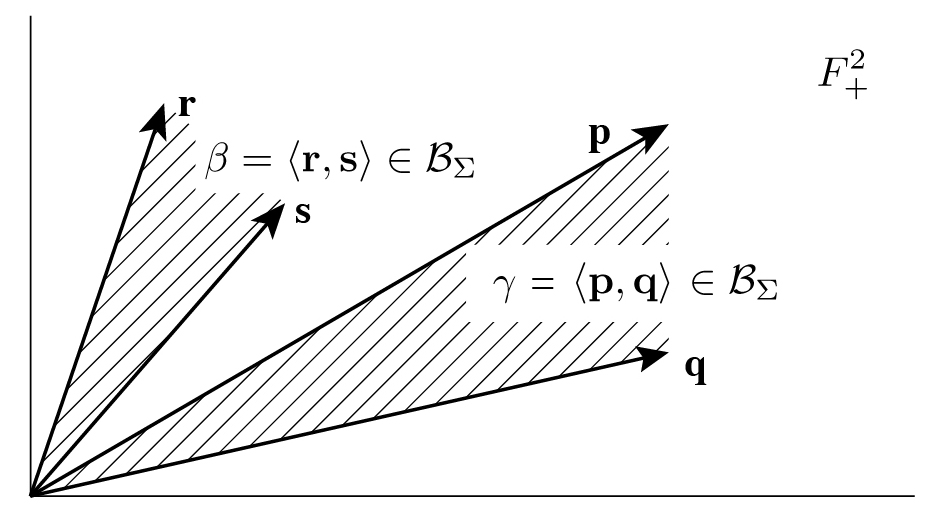}
\caption{\label{Fig2}Cones of the bunch $\B_{\Si}$ as in Theorem \ref{teo:erredueproj}}
\end{center}
\end{figure}

\section{A 3--dimensional counterexample with $r=3$}\label{sez:esempio}

In this section we are going to exhibit an example of a 3--dimensional $\Q$--factorial complete toric variety $X$ of Picard number $r=3$ not admitting \emph{any non--trivial numerically effective divisor} i.e. such that $\Nef(X)=0$. Therefore such an example is significant for at least two reasons:
\begin{itemize}
\item it gives a sharp counterexample to the question if Theorem \ref{teo:erredueproj} is holding for higher values of $r$: in this sense it does not say nothing new with respect to the Berchtold--Hausen example \cite[Ex.\,10.2]{Berchtold-Hausen};
\item for smooth complete toric $3$--folds O.\,Fujino and S.\,Payne \cite{FP} proved that if $\Nef(X)=0$ then $r\geq 5$\,: their Example 1 is produced by performing a double divisorial blow up of a complete toric variety which turns out to be isomorphic to the one we are going to present, showing that this last inequality does no longer hold when the smoothness hypothesis is replaced by the $\Q$--factorial one, actually giving the sharpness of condition (\ref{nef0Q}) in the introduction.
\end{itemize}
Let us start by considering the following positive $W$--matrix \cite[Def.~3.9]{RT-LA&GD}, \cite[Def.~1.4]{RT-QUOT} in row echelon form
\begin{equation*}
Q:=\left(
     \begin{array}{cccccc}
       1&1&0&0&1&0\\
      0&1&1&1&0&0\\
       0&0&0&2&1&1 \\
     \end{array}
   \right)
\end{equation*}
The cone $\gkz=\langle Q\rangle$ spanned by its columns gives the effective cone $\Eff(X)$ of a $\Q$--factorial complete 3--dimensional toric variety $X$ of Picard number $r=3$ whose fan matrix is given by a Gale dual $F$--matrix $V$ of $Q$ \cite[Def.~3.10]{RT-LA&GD}, \cite[Def.~1.2]{RT-QUOT} e.g.
\begin{equation*}
 V=\left(
  \begin{array}{cccccc}
    1&0&0&0&-1&1 \\
    0&1&0&-1&-1&3 \\
    0&0&1&-1&0&2 \\
  \end{array}
\right)\,.
\end{equation*}
This fan matrix can support 8 different rational, simplicial and complete fans whose 1--skeletons coincide with the rays spanned by all the columns of $V$, namely $\SF(V)= \{\Si_i\,|\ 1\leq i\leq 8\}$\ \cite[Def.~1.3]{RT-LA&GD} with
\begin{eqnarray*}
  \Si_1 &=& \{\langle3, 4, 5\rangle, \langle2, 4, 5\rangle, \langle2, 3, 5\rangle, \langle1, 3, 4\rangle, \langle1, 2, 4\rangle, \langle2, 3, 6\rangle, \langle1, 3, 6\rangle, \langle1, 2, 6\rangle\} \\
  \Si_2 &=& \{\langle2, 4, 5\rangle, \langle1, 4, 5\rangle, \langle1, 3, 5\rangle, \langle3, 5, 6\rangle, \langle2, 5, 6\rangle, \langle1, 2, 4\rangle, \langle1, 3, 6\rangle, \langle1, 2, 6\rangle\} \\
  \Si_3 &=& \{\langle2, 4, 5\rangle, \langle1, 4, 5\rangle, \langle1, 3, 5\rangle, \langle3, 5, 6\rangle, \langle2, 5, 6\rangle, \langle2, 4, 6\rangle, \langle1, 4, 6\rangle, \langle1, 3, 6\rangle\} \\
  \Si_4 &=& \{\langle2, 4, 5\rangle, \langle2, 3, 5\rangle, \langle1, 4, 5\rangle, \langle1, 3, 5\rangle, \langle1, 2, 4\rangle, \langle2, 3, 6\rangle, \langle1, 3, 6\rangle, \langle1, 2, 6\rangle\} \\
  \Si_5 &=& \{\langle3, 4, 5\rangle, \langle2, 4, 5\rangle, \langle3, 5, 6\rangle, \langle2, 5, 6\rangle, \langle1, 3, 4\rangle, \langle2, 4, 6\rangle, \langle1, 4, 6\rangle, \langle1, 3, 6\rangle\} \\
  \Si_6 &=& \{\langle3, 4, 5\rangle, \langle2, 4, 5\rangle, \langle2, 3, 5\rangle, \langle1, 3, 4\rangle, \langle2, 4, 6\rangle, \langle2, 3, 6\rangle, \langle1, 4, 6\rangle, \langle1, 3, 6\rangle\} \\
  \Si_7 &=& \{\langle3, 4, 5\rangle, \langle2, 4, 5\rangle, \langle3, 5, 6\rangle, \langle2, 5, 6\rangle, \langle1, 3, 4\rangle, \langle1, 2, 4\rangle, \langle1, 3, 6\rangle, \langle1, 2, 6\rangle\} \\
  \Si_8 &=& \{\langle2, 4, 5\rangle, \langle2, 3, 5\rangle, \langle1, 4, 5\rangle, \langle1, 3, 5\rangle, \langle2, 4, 6\rangle, \langle2, 3, 6\rangle, \langle1, 4, 6\rangle, \langle1, 3, 6\rangle\}
\end{eqnarray*}
\begin{figure}
\begin{center}
\includegraphics[width=7truecm]{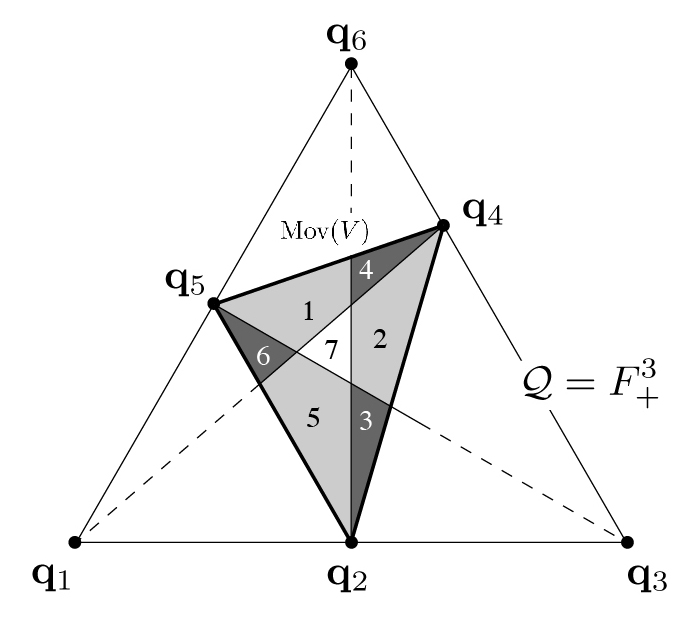}
\caption{\label{Fig3}The section of the effective divisors cone $\gkz=F^3_+$ with the plane $x_1+x_2+x_3=1$.}
\end{center}
\end{figure}
where $\langle i,j,k\rangle$ denotes the cone spanned in $N_{\R}$ by $i$--th, $j$--th and $k$--the columns of the fan $V$. Fans $\Si_i$, for $1\leq i\leq 7$, give rise to projective toric varieties, each of them being associated with a full dimensional chamber inside the Moving cone $\Mov(V)\subseteq \gkz$ of $X$, which actually depends on the 1--skeleton of the fan, only, i.e. on the columns of $V$: the situation is represented in Fig. \ref{Fig3}, by cutting out the positive orthant $F^3_+=\gkz$, inside $F^3_{\R}$, with the hyperplane $x_1+x_2+x_3=1$. On the contrary the last fan $\Si_8$ does not give rise to a projective toric variety since its Gale dual bunch of cones is given by
\begin{equation*}
  \B_{\Si_8}=\{\langle1, 3, 6\rangle, \langle1, 4, 6\rangle, \langle2, 3, 6\rangle, \langle2, 4, 6\rangle, \langle1, 3, 5\rangle, \langle1, 4, 5\rangle, \langle2, 3, 5\rangle, \langle2, 4, 5\rangle\}
\end{equation*}
where now $\langle i,j,k\rangle$ denotes the cone spanned in $F_{\R}^3$ by $i$--th, $j$--th and $k$--the columns of the weight matrix $Q$. One can easily check that the intersection of all the cones in the bunch $\B_{\Si_8}$ is trivial, hence giving
\begin{equation*}
  \Nef(X(\Si_8))=\bigcap_{\b\in\B_{\Si_8}} \b= 0\,.
\end{equation*}
For sake of completeness, let us finally observe that, using technics like those given in \cite[Thm.~4]{RT-Ample}, one can check that the anti-canonical class lies on the wall separating chamber 2 and 7 in Fig.~\ref{Fig3}: in particular none of the fans in $\SF(V)$ give a Gorenstien toric varieties but $X(\Si_2)$ and $X(\Si_7)$ are \emph{weak $\Q$-Fano} toric varieties.

\begin{remark}\label{rem:} The previous example has been obtained by \emph{deforming} the Berchtold--Hausen example \cite[Ex.\,10.2]{Berchtold-Hausen} in the following sense. The latter can be obtained as above, by starting from the weight matrix
\begin{equation*}
Q_{BH}:=\left(
     \begin{array}{cccccc}
       1&1&0&0&1&0\\
      0&1&1&1&0&0\\
       0&0&0&1&1&1 \\
     \end{array}
   \right)\
\ \Rightarrow\ V_{BH}= \left(
  \begin{array}{cccccc}
    1&0&0&0&-1&1 \\
    0&1&0&-1&-1&2 \\
    0&0&1&-1&0&1 \\
  \end{array}
\right)
\end{equation*}
\begin{figure}
\begin{center}
\includegraphics[width=8truecm]{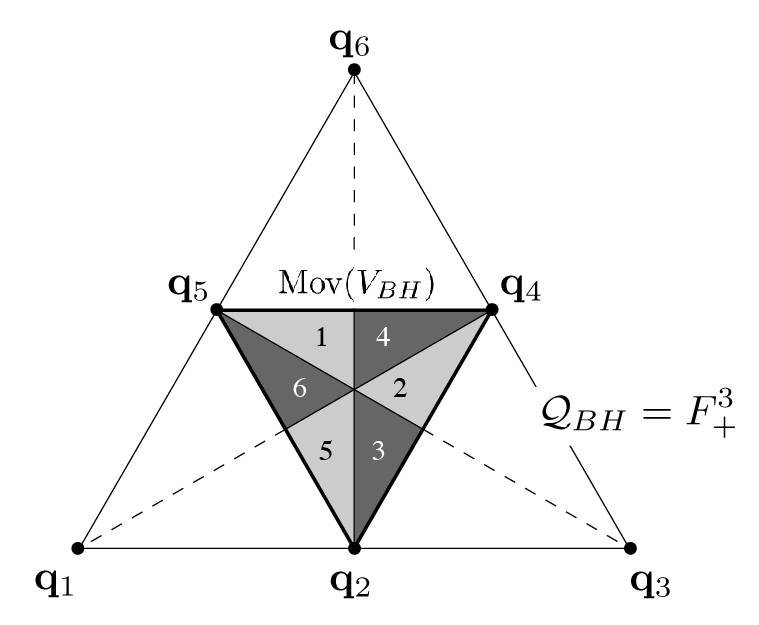}
\caption{\label{Fig4}The section of the effective divisors cone $\gkz_{BH}=F^3_+$, in \cite[Ex.\,10.2]{Berchtold-Hausen}, with the plane $x_1+x_2+x_3=1$.}
\end{center}
\end{figure}
Also in this case we obtain 8 rational, simplicial and complete fans whose fan matrix is $V_{BH}$, given by the $2^3$ possible subdivision of the three quadrangular faces of the prism whose vertexes are given by the columns of $V_{BH}$ (see Fig.\,\ref{Fig5}).  But only 6 of them give rise to projective varieties, while the remaining 2 fans originates two distinct complete and non--projective $\Q$--factorial 3--dimensional varieties of Picard number 3. The situation is represented in  Fig.\,\ref{Fig5} and Gale dually in Fig. \ref{Fig4}. Explicitly the two non--projective fans are the following:
\begin{eqnarray*}
             \Si&=& \{\langle2, 4, 5\rangle, \langle2, 3, 5\rangle, \langle1, 4, 5\rangle, \langle1, 3, 5\rangle, \langle2, 4, 6\rangle, \langle2, 3, 6\rangle, \langle1, 4, 6\rangle, \langle1, 3, 6\rangle\} \\
            \Si' &=& \{\langle3, 4, 5\rangle, \langle2, 4, 5\rangle, \langle3, 5, 6\rangle, \langle2, 5, 6\rangle, \langle1, 3, 4\rangle, \langle1, 2, 4\rangle, \langle1, 3, 6\rangle, \langle1, 2, 6\rangle\}
\end{eqnarray*}
whose Gale dual bunches are given by
\begin{eqnarray*}
  \B &=& \{\langle1, 3, 6\rangle, \langle1, 4, 6\rangle, \langle2, 3, 6\rangle, \langle2, 4, 6\rangle, \langle1, 3, 5\rangle, \langle1, 4, 5\rangle, \langle2, 3, 5\rangle, \langle2, 4, 5\rangle\} \\
  \B' &=& \{\langle1, 2, 6\rangle, \langle1, 3, 6\rangle, \langle1, 2, 4\rangle, \langle1, 3, 4\rangle, \langle2, 5, 6\rangle, \langle3, 5, 6\rangle, \langle2, 4, 5\rangle, \langle3, 4, 5\rangle\}\,.
\end{eqnarray*}
Looking at Fig. \ref{Fig4}, one can easily check that
\begin{equation*}
  \Nef(X)=\bigcap_{\b\in\B} \b= \left\langle\begin{array}{c}
                                        1 \\
                                        1 \\
                                        1
                                      \end{array}
  \right\rangle=\bigcap_{\b'\in\B'} \b'=\Nef(X')\,,
\end{equation*}
giving the 1--dimensional cone generated by the class $\left\langle\begin{array}{c}
                                        3 \\
                                        3 \\
                                        3
                                      \end{array}
  \right\rangle$ of the anti--canonical divisor.

  \noindent Fig.~\ref{Fig4} explains as the anti-canonical class lies on the intersection of all the chambers, so giving that every fan in $\SF(V_{BH})$ gives a non-Gorestein weak $\Q$-Fano toric variety.

   The two fans $\Si$ and $\Si'$ corresponds to symmetric subdivisions of the three quadrangular faces of the prism ge\-ne\-ra\-ted by the columns of $V$ (see Fig.\,\ref{Fig5}). Such a symmetry is Gale dually reflected by the position of the anti--canonical class with respect the subdivision of $\Mov(V_{BH})$ giving the secondary fan. More precisely, Fig.\,\ref{Fig4} clearly shows that $\Mov(V_{BH})$  and the secondary fan turn out to be invariant under the action of the three symmetries with respect to the three axis of the triangle representing the section of $\gkz=F^3_+$ with the plane $x_1+x_2+x_3=1$, i.e. the symmetries of $F^3_{\R}$ with respect to the three planes supporting the 2--dimensional cones $\langle 1,4\rangle$, $\langle 2,6\rangle$ and $\langle 3,5\rangle$. These symmetries correspond to permutations on the columns of the weight matrix $Q$ given by $(2\,5)(3\,6)$, $(1\,3)(4\,5)$ and $(1\,6)(2\,4)$, respectively. Notice that bunches $\B$ and $\B'$ are related each other by every one of those permutations. Geometrically this means that $X$ and $X'$ are related by a bi--meromorphic map, still called a \emph{flip}, given by the contraction of three facets of the fan, namely the closures of torus orbits $\overline{O(\langle 2,3\rangle)}$, $\overline{O(\langle 1,5\rangle)}$ and $\overline{O(\langle 4,6\rangle)}$, followed by the small resolution whose exceptional locus is given by the union of the symmetric facets   $\overline{O(\langle 5,6\rangle)}$, $\overline{O(\langle 3,4\rangle)}$ and $\overline{O(\langle 1,2\rangle)}$ (compare with Fig.\,\ref{Fig5}). Then $X$ and $X'$ are isomorphic in codimension 1, but not in codimension 2.

  If we now consider the weight matrix
  \begin{equation*}
Q_{t}:=\left(
     \begin{array}{cccccc}
       1&1&0&0&1&0\\
      0&1&1&1&0&0\\
       0&0&0&t&1&1 \\
     \end{array}
   \right)\
\ \Rightarrow\ V_{t}= \left(
  \begin{array}{cccccc}
    1&0&0&0&-1&1 \\
    0&1&0&-1&-1&1+t \\
    0&0&1&-1&0&t \\
  \end{array}
\right)
\end{equation*}
with $t$ a positive integer number, we get the Berchtold--Hausen example for $t=1$, our example for $t=2$ and an infinite number of distinct examples analogous to the latter,  for $t>1$. If we look at the evolution of the fan $\Si_7$, hence of the chamber 7 in Fig. \ref{Fig3}, we get a countable family $\mathcal{X}_7\to \N\setminus\{0\}$ of $\Q$--factorial projective toric varieties degenerating to a non--projective complete one for $t=1$.

\begin{figure}
\begin{center}
\includegraphics[width=13truecm]{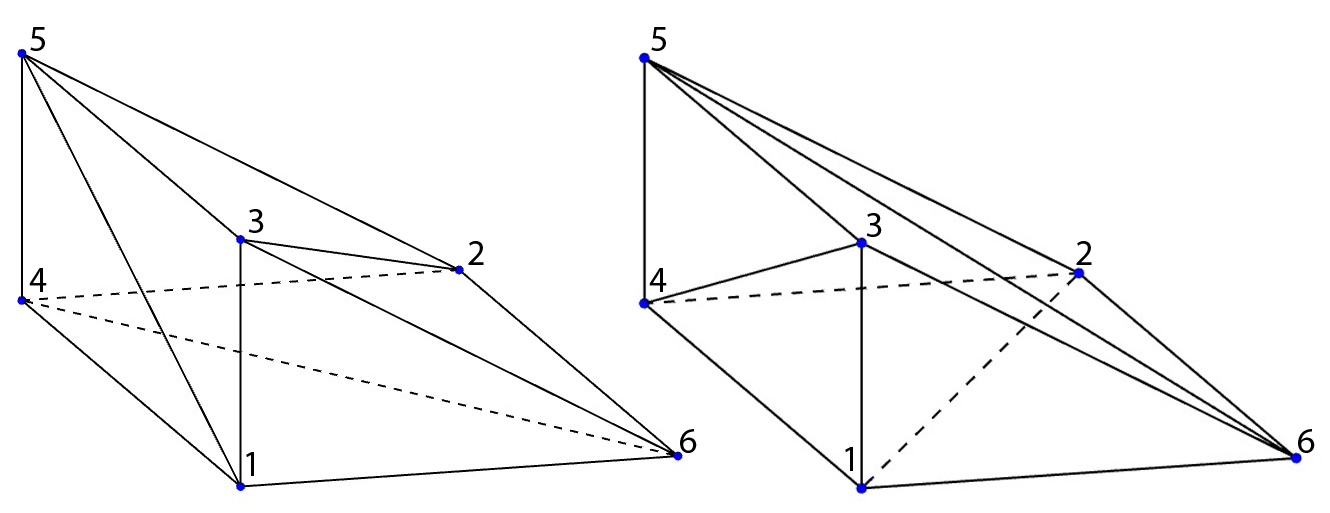}
\caption{\label{Fig5}The two symmetric subdivisions of the prism, whose vertexes are given by the columns of $V$, and giving rise to two distinct and bi--meromorphic $\Q$--factorial complete and non--projective toric varieties $X(\Si)$ and $X(\Si')$, respectively. The former is that given by \cite[Ex.\,10.2]{Berchtold-Hausen}. Notice that all the possible fans in $\SF(V)$ are given by the $2^3$ possible subdivisions of the quadrangular faces.}
\end{center}
\end{figure}

Let us finally observe that a similar situation, although with Picard number 4, can be observed for the Oda's example \cite[p.\,84]{Oda} too, from which the Berchtold--Hausen one is obtained by suppressing a fan generator. In Oda's notation, the symmetric non--projective fan, with respect to the one presented in \cite{Oda}, can be obtained by exchanging $n_2\leftrightarrow n_3$ and $n_2'\leftrightarrow n_3'$.
\end{remark}

 \begin{acknowledgements} We would like to thank Cinzia Casagrande for helpful conversations and suggestions. We also thank Daniela Caldini for her contribution in making the figures of the present paper.
\end{acknowledgements}

\end{document}